\newcommand{\C}{\mathbb C}
\newcommand{\R}{\mathbb R}
\newcommand{\N}{\mathbb N}
\newcommand{\Z}{\mathbb Z}
\newcommand{\eps}{\varepsilon}
\newcommand{\norm}[1]{\left\Vert #1\right\Vert}
\newcommand{\abs}[1]{\left\vert #1 \right\vert}
\newcommand{\seq}[2]{(#1_{#2})_{#2\in\N}}
\newcommand{\sumo}[2]{\sum_{#1=1}^#2}
\newcommand{\ball}{\mathrm{ball}}
\newcommand{\up}{\mathrm}
 \newcommand{\lopen}{\mathopen{]}}
 \newcommand{\ropen}{{\mathclose{[}}}
\newtheorem{thm}{Theorem}[section]
\newtheorem{lem}[thm]{Lemma}
\newtheorem{prop}[thm]{Proposition}
\newtheorem{corollary}[thm]{Corollary}
\newtheorem{Theorem}[thm]{Theorem}
\theoremstyle{definition}
\theoremstyle{definition}
\theoremstyle{definition}
\newtheorem{ex}[thm]{Example}
\begin{document}


\title{Compactness of derivations from \\commutative  Banach algebras.}


\author{Matthew J. Heath}

\maketitle

\begin{abstract}
We consider the compactness of derivations from commutative Banach algebras into their dual modules. We show that if there are no compact derivations from a commutative Banach algebra, $A$, into its dual module, then there are no compact derivations from $A$ into any symmetric $A$-bimodule; we also prove analogous results for weakly compact derivations and for bounded derivations of finite rank. We then characterise the compact derivations from the convolution algebra $\ell^1(\Z_+)$ to its dual. Finally, we give an example (due to J. F. Feinstein) of a non-compact, bounded derivation from a uniform algebra $A$ into a symmetric $A$-bimodule.  
\end{abstract}
{\bf Keywords:} Commutative Banach algebras, compact derivations, convolution Banach algebras, uniform algebras.\\
{\bf MSC2000:} Primary 46J10; Secondary 46H25, 46J05
\section{Introduction}The question of the compactness of endomorphisms of Banach algebras has been studied in, for example \cite{Kamowitzcompact}, \cite{FeiKamHinfty}, and \cite{FeiKamdiferentiable}.
In this paper we consider compactness for another class of maps of interest in Banach algebra theory, derivations from a Banach algebra to its dual.  
In \cite{ChoiHeathAD} Yemon Choi and the present author  showed that all derivations from the disc algebra to its dual are compact. In \cite{ChoiHeathell1} the same two authors characterised when derivations from $\ell^1(\Z_+)$ to its dual are weakly compact.
\subsection{Definitions and notation}
Throughout we shall take all Banach spaces to be over the field of complex numbers.

Let $A$ be a Banach algebra. Recall that a \emph{Banach $A$-bimodule} is a Banach space together with two bilinear maps $A\times E\rightarrow E$ denoted $(a,x)\mapsto a\cdot x$ and $(a,x)\mapsto x\cdot a$ such that:
\[a\cdot(b\cdot x)+(ab)\cdot x,\,(x\cdot a)\cdot b=x\cdot (ab),\,a\cdot(x\cdot b)=(a\cdot x)\cdot b\qquad(a,b\in A,x\in E).\] 
Clearly, if we define both actions to be the product, $A$ becomes an $A$-bimodule. If $E$ and $F$ are Banach $A$-bimodules we call a linear map $R:E\rightarrow F$ an \emph{$A$-bimodule} homomorphism if:
\[R(a\cdot e)=a\cdot R(e),\,R(e\cdot a)=R(e)\cdot a\qquad(a\in A, e\in E).\]

We say $E$ is a \emph{symmetric} $A$-bimodule if, for all $a\in A$ and all $x\in E$, we have $a\cdot x=x\cdot a$.  

Let $A$ be an algebra and $E$ an $A$-bimodule. We call a linear map $D:A\rightarrow B$ a \emph{derivation} if the following identity holds for all $x,y\in A$:
\[D(ab)=a\cdot D(b)+D(a)\cdot b.
\]
The derivation $D$ is called \emph{inner} if the is $e\in E$ such that $D(a)=a\cdot e-e\cdot a$ for all $a\in A$. We call this inner derivation $\delta_e$. If $E$ is symmetric, it is clear that the only inner derivation from $A$ into $E$ is the zero derivation. 

For a Banach space $E$ we denote the topological dual of $E$ by $E^*$. If $A$ is a Banach algebra and $E$ a Banach $A$-bimodule we make $E^*$ into a Banach $A$-bimodule by defining the actions:
\[(a\cdot \psi)(x)=\psi (x\cdot a),\, (\psi \cdot a)(x)=\psi(a\cdot y),\qquad (a\in A, \psi\in E^*, x\in E).\]
\section{Results}
\subsection{General results}

Recall the well-known result, due to Bade \emph{et al.} (\cite{BCD}, also found as \cite[2.8.63 (iii)]{Dales}) that, if a commutative Banach algebra $A$ has no non-zero, bounded derivations into $A^*$ (i.e. if $A$ is 
weakly amenable), then it has no non-zero, bounded derivations into any symmetric $A$-bimodule.  In this subsection we prove analogues of this result with ``bounded'' replaced by ``compact'', by ``weakly compact'' and by ``bounded and of rank less than $n$'' for some $n\in\N$.

We shall need the following lemma, which is a stronger version of \cite[2.8.63 (i)]{Dales} 
\begin{lem}\label{rank1}
Let $A$ be a Banach algebra with no non-zero, non-inner, bounded derivations of rank $1$, into $A^*$. Then $\overline{A^2}=A$.
\end{lem}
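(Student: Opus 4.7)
The plan is to prove the contrapositive: assuming $\overline{A^2} \neq A$, I will construct a non-zero, non-inner, bounded derivation of rank one from $A$ into $A^*$. Since $\overline{A^2}$ is then a proper closed subspace of $A$, the Hahn--Banach theorem supplies a non-zero $\psi \in A^*$ that vanishes on $\overline{A^2}$, and the most economical rank-one candidate built from $\psi$ alone is the map $D \colon A \to A^*$ given by $D(a) = \psi(a)\psi$. This $D$ is clearly bounded, of rank at most one, and non-zero, since $\psi \neq 0$.

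The central computation is the derivation identity, and the key observation that makes it work is that both dual-module actions by elements of $A$ annihilate $\psi$: by definition $(a \cdot \psi)(x) = \psi(xa)$ and $(\psi \cdot b)(x) = \psi(bx)$, and both $xa$ and $bx$ lie in $A^2$, so $a \cdot \psi = 0 = \psi \cdot b$. Combined with $\psi(ab) = 0$, this immediately yields $D(ab) = 0 = a \cdot D(b) + D(a) \cdot b$, so $D$ is indeed a derivation.

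The step I expect to be subtlest is verifying that $D$ is non-inner, since the hypothesis excludes only the non-inner rank-one derivations. Here I would appeal to the ambient commutativity of $A$ in this subsection: when $A$ is commutative, $(a \cdot \psi)(x) = \psi(xa) = \psi(ax) = (\psi \cdot a)(x)$, so $A^*$ is a symmetric $A$-bimodule, whence the only inner derivation from $A$ into $A^*$ is the zero derivation; as $D \neq 0$, it is non-inner, and the contrapositive is complete.
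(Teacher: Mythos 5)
Your construction of $D(a)=\psi(a)\psi$, and the verification that it is a bounded, non-zero, rank-one derivation (using that $\psi$ kills $A^2$ so that both $D(ab)$ and $a\cdot D(b)+D(a)\cdot b$ vanish), is exactly the paper's argument. The gap is in the non-innerness step. The lemma as stated assumes only that $A$ is a Banach algebra; commutativity is not a hypothesis, so you cannot appeal to ``the ambient commutativity of $A$ in this subsection''. Without commutativity, the claim that $A^*$ is a symmetric $A$-bimodule (and hence admits no non-zero inner derivations) is false in general, so as written your argument establishes only the commutative case of the lemma.

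The repair is short, and it is what the paper does: pick $a_0\in A\setminus\overline{A^2}$ and choose $\psi$ with $\psi\vert_{A^2}=0$ and $\psi(a_0)=1$. Then $D(a_0)(a_0)=\psi(a_0)^2=1$, whereas for \emph{every} $\lambda\in A^*$ the inner derivation satisfies
\[
\delta_\lambda(a_0)(a_0)=(a_0\cdot\lambda)(a_0)-(\lambda\cdot a_0)(a_0)=\lambda(a_0a_0)-\lambda(a_0a_0)=0,
\]
since both dual-module actions, evaluated at the same element $a_0$, produce $\lambda(a_0^2)$. Hence $D\neq\delta_\lambda$ for all $\lambda$, with no commutativity needed. (For the only application of the lemma in the paper, where $A$ is commutative, your symmetric-module argument would suffice; but it does not prove the lemma as stated.)
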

\begin{proof}We prove this result in the contrapositive. Let $A$ be a Banach algebra such that $\overline{A^2}\ne A$; we shall construct the required derivation. Take $a_0\in A\setminus \overline{A^2}$. By the Hahn-Banach theorem we may choose 
$\lambda_0\in A^*$ with $\lambda_0\vert_{A^2}=0$ and $\lambda_0(a_0)=1$. We define a function as follows:
\begin{eqnarray*}
D:&A&\rightarrow A^*\\
&a&\mapsto\lambda_0(a)\lambda_0.
\end{eqnarray*}
It is clear that $D$ is a bounded linear map. Also, $D(A) = \lambda_0\C$ and so $D$ is of rank $1$. Since $\lambda_0(A^2)=0$, we have, for $a,b,c \in A$,  
\[D(ab)(c)=\lambda_0(ab)\lambda_0(c)=0.\]
and 
\begin{eqnarray*}
(a\cdot D(b)+D(a)\cdot b)(c)&=&D(b)(ca)+D(a)(bc)\\
&=&\lambda_0(b)\lambda_0(ca)+\lambda_0(a)\lambda_0(bc)=0.
\end{eqnarray*} 
Hence, $D(ab)=a\cdot D(b)+D(a)\cdot b=0$ and so 
$D$ is a derivation. Furthermore, 
\[\norm{D(a_0)(a_0)}=\abs{\lambda_0(a_0)}\abs{\lambda_0(a_0)}=1\] 
while, for each $\lambda\in A^*$,
\begin{eqnarray*}
\delta_{\lambda}(a_0)(a_0)&=&(a_0\cdot\lambda)(a_0)-(\lambda\cdot a_0)(a_0)\\
&=&\lambda(a_0^2)-\lambda(a_0^2)=0.
\end{eqnarray*}
Thus $D$ is not inner, and so the result follows. 
\end{proof}
We shall also want the following elementary lemma, which may be found as \cite[2.6.6.(i)]{Dales}.
\begin{lem}\label{Daleslem}
Let $A$ be a commutative Banach algebra, let $E$ be a symmetric $A$-bimodule and let $\lambda\in E^*$. Then there is a bounded $A$-bimodule homomorphism $R_\lambda:E\rightarrow A^*$ such that 
\[R_\lambda(x)(a)=\lambda(a\cdot x)\quad(a\in A, x\in E).\]
\end{lem}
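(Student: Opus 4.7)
The plan is a direct construction: define $R_\lambda$ by the formula in the statement and check that it lands in $A^*$, is bounded, and respects the bimodule actions. For fixed $x \in E$, the assignment $a \mapsto \lambda(a\cdot x)$ is clearly linear in $a$ by bilinearity of the module action, and the estimate $|\lambda(a\cdot x)| \le \|\lambda\|\,\|a\|\,\|x\|$ shows that $R_\lambda(x) \in A^*$ with $\|R_\lambda(x)\| \le \|\lambda\|\,\|x\|$. Linearity of $R_\lambda$ in $x$ is again immediate from bilinearity, and the same estimate gives $\|R_\lambda\| \le \|\lambda\|$.

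The main verification is that $R_\lambda$ is an $A$-bimodule homomorphism. I would begin by unwinding the dual action on $A^*$: viewing $A$ as a Banach $A$-bimodule under multiplication, the definition from the introduction specialises to $(b\cdot\psi)(a)=\psi(ab)$ and $(\psi\cdot b)(a)=\psi(ba)$ for $\psi\in A^*$, which coincide because $A$ is commutative. The left module identity then follows from the associativity axiom $a\cdot(b\cdot x)=(ab)\cdot x$: for all $a,b\in A$ and $x\in E$,
\[
R_\lambda(b\cdot x)(a)=\lambda\bigl(a\cdot(b\cdot x)\bigr)=\lambda\bigl((ab)\cdot x\bigr)=R_\lambda(x)(ab)=\bigl(b\cdot R_\lambda(x)\bigr)(a).
\]

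For the right action one uses both hypotheses: symmetry of $E$ to pass $(a\cdot x)\cdot b$ to $b\cdot(a\cdot x)=(ba)\cdot x$, and commutativity of $A$ to identify $(ba)\cdot x$ with $(ab)\cdot x$. A parallel calculation then matches $R_\lambda(x\cdot b)$ with $R_\lambda(x)\cdot b$. There is no real obstacle here; the step that requires the most care is simply keeping the order of the module factors straight and invoking symmetry and commutativity in the right place so that both sides of each identity reduce to $\lambda((ab)\cdot x)$.
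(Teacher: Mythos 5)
Your proof is correct and is precisely the routine verification that the paper omits, deferring instead to its citation of \cite[2.6.6(i)]{Dales}: one defines $R_\lambda$ by the stated formula, bounds it by $\|\lambda\|$ (up to the constant in the module bound, if the bimodule actions are assumed bounded rather than contractive), and checks the two homomorphism identities exactly as you do, using commutativity of $A$ and symmetry of $E$ to reduce both sides to $\lambda((ab)\cdot x)$. Nothing further is needed.
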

We can now prove the main result of this subsection. Each part of the proof follows the pattern of \cite[2.8.63 (iii)]{Dales}.
\begin{Theorem} Let $A$ be a commutative Banach algebra. Then the following are true:
\begin{enumerate}
\item if $A$ has no non-zero, bounded, derivations of rank less than $n\in\N$ into $A^*$ then it has no non-zero derivations of rank 
less than $n$ into any symmetric Banach $A$-bimodule $E$;
\item if $A$ has no non-zero, compact derivations into $A^*$ then it has no non-zero compact 
derivations into any symmetric Banach $A$-bimodule $E$;
\item if $A$ has  no non-zero, weakly-compact derivations into $A^*$ then it has no non-zero weakly-compact derivations into any 
symmetric Banach $A$-bimodule $E$.
\end{enumerate}

\end{Theorem}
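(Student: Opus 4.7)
The plan is to treat the three parts uniformly, following the Bade--Curtis--Dales template. Fix a symmetric Banach $A$-bimodule $E$ and a derivation $D : A \to E$ of the relevant type. For each $\lambda \in E^*$, Lemma \ref{Daleslem} supplies a bounded $A$-bimodule homomorphism $R_\lambda : E \to A^*$, and I form the composition
\[
D_\lambda := R_\lambda \circ D : A \to A^*.
\]
Because $R_\lambda$ is a bimodule homomorphism, $D_\lambda$ is automatically a derivation into $A^*$. The strategy is to verify in each of the three cases that $D_\lambda$ inherits the property in question from $D$, invoke the hypothesis on $A$ to conclude $D_\lambda = 0$ for every $\lambda$, and then read off $D = 0$.

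For the inheritance step: in part (1), $D_\lambda(A) \subseteq R_\lambda(D(A))$, which has dimension at most that of $D(A)$, so the rank bound is preserved. In (2), compactness passes through $R_\lambda$ because $R_\lambda$ is bounded and so maps a relatively norm-compact set to a relatively norm-compact set. In (3), the same argument works for weak compactness, since any bounded operator between Banach spaces is weak-to-weak continuous. In all three cases the hypothesis on $A$ therefore forces $D_\lambda = 0$, i.e.\ $\lambda(a \cdot D(b)) = 0$ for all $a, b \in A$ and all $\lambda \in E^*$. A Hahn--Banach argument then yields $a \cdot D(b) = 0$ in $E$ for every $a, b \in A$.

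To finish I want $\overline{A^2} = A$. Since $A$ is commutative, $A^*$ is a symmetric $A$-bimodule, so every inner derivation into $A^*$ is zero; hence the hypothesis of each part implies in particular that there are no non-zero, non-inner, bounded rank-$1$ derivations into $A^*$ (in part (1) the case $n=1$ is vacuous and for $n\ge 2$ rank less than $n$ includes rank $1$). Lemma \ref{rank1} then delivers $\overline{A^2} = A$. Combined with $a \cdot D(b) = 0$, the derivation identity together with the symmetry of $E$ gives
\[
D(ab) = a \cdot D(b) + D(a) \cdot b = a \cdot D(b) + b \cdot D(a) = 0
\]
for all $a, b \in A$, which extends by continuity of $D$ to $D \equiv 0$ on $\overline{A^2}=A$.

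I do not foresee a genuine obstacle: the three cases differ only in which stability property of $R_\lambda \circ D$ has to be checked, and each of those checks is a one-line exercise about post-composition with a bounded operator. The only step requiring a moment's thought is recognising that the hypothesis of Lemma \ref{rank1} is indeed triggered in each of the three settings, which in turn relies on the (elementary) observation that $A^*$ is symmetric whenever $A$ is commutative.
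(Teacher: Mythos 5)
Your proposal is correct and uses essentially the same machinery as the paper's proof --- Lemma \ref{Daleslem} to form $R_\lambda\circ D$, the observation that rank, compactness and weak compactness survive post-composition with the bounded map $R_\lambda$, and Lemma \ref{rank1} to get $\overline{A^2}=A$ --- merely arranged contrapositively: the paper argues by contradiction, using $\overline{A^2}=A$ first to find $a_0$ with $a_0\cdot D(a_0)\neq 0$ and then a single Hahn--Banach-chosen $\lambda$ making $R_\lambda\circ D$ a non-zero derivation of the forbidden type, whereas you let $\lambda$ range over all of $E^*$, conclude $a\cdot D(b)=0$, and invoke $\overline{A^2}=A$ and symmetry at the end. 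This is a sound, equivalent rearrangement rather than a genuinely different route.
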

\begin{proof}
In each case we shall assume, towards contradiction, that such a derivation exists. First, let $E$ be a symmetric $A$-bimodule and let 
$D:A\rightarrow E$ be any non-zero, bounded derivation. By 
Theorem \ref{rank1}, $\overline{A^2}=A$, and so there is $a_0\in A$ with 
$D\left(a_0^2\right)\ne 0$. Thus, $a_0\cdot D_{a_0}=1/2D\left(a_0^2\right)\ne 0$, and so, by the Hahn-Banach theorem,
there exists $\lambda_D \in E^*$ such that $\lambda(a_0\cdot D(a_0))=1$. By 
Lemma \ref{Daleslem}  there is a continuous $A$-bimodule homomorphism 
$R_{\lambda_D}:E\rightarrow A^*$ such that 
$R_{\lambda_D} (x)(a)=\lambda_D(a\cdot x)$ for each $x\in E$. 
Now let $D'=R_{\lambda_D}\circ D:A\rightarrow A^*$. Clearly $D$ is a bounded linear map, and since 
$R_{\lambda_D}$ is an $A$-bimodule homomorphism we have, for $a,b\in A$,
\begin{eqnarray*}
D'(ab)&=&R_{\lambda_D}(D(ab))\\
&=&R_{\lambda_D}(a\cdot D(b)+b\cdot D(a))\\
&=&a\cdot R_{\lambda_D}(D(b))+b\cdot R_{\lambda_D}(D(a))\\
&=& a\cdot D'(b)+b\cdot D'(a).
\end{eqnarray*}
Thus, $D'$ is a derivation. Also, $D'(a_0)(a_0)=\lambda_D(a_0\cdot D(a_0))=1$ 
and so $D'\ne0$. 

To show part (1) we now let $n\in\N$ and $D$ be a bounded derivation of rank less than $n$. 
Then $D'(A)=R_{\lambda_D}(D(A))$  is a linear image of a space of dimension less than $n$. Hence, $D'(A)$ has dimension less than $n$ 
and so $R_\lambda \circ D$ has rank less than $n$.

To show part (2) we let $D$ be a compact derivation. Then 
\begin{equation}\label{kitten}
\overline{R_{\lambda_D}\left(\overline{D(\ball(A))}\right)}\supseteq \overline{D'(\ball(A))}=
\overline{R_{\lambda_D}(D(\ball(A)))}\supseteq R_{\lambda_D}\left(\overline{D(\ball(A))}\right).
\end{equation}
Now, since $D$ is compact, $\overline{D(B(A))}$ is compact and so $R_{\lambda_D}\left(\overline{D(B(A))}\right)$ is compact. In particular, $R_{\lambda_D}\left(\overline{D(B(A))}\right)$ is closed. Thus, (\ref{kitten}) gives
\begin{equation}\label{kitten1}
\overline{R_{\lambda_D}\left(\overline{D(B(A))}\right)}= \overline{D'(B(A))}=R_{\lambda_D}\left(\overline{D(B(A))}\right),
\end{equation}
and so $\overline{D'(B(A))}$ is compact. Hence, $D'$ is a compact linear map.

To show part (3)  let $D$ be a weakly compact derivation.  Since $D$ is weakly compact, $\overline{D(B(A))}$ is 
weakly compact and so $R_{\lambda_D}\left(\overline{D(B(A))}\right)$ is 
weakly compact since bounded linear maps are weak-weak continuous. Thus equation (\ref{kitten}) holds with the 
closures taken in the weak topology, and so the weak closure of $D'(B(A))$ is weakly compact. 
Hence, $D'$ is a weakly compact linear map.

In each case we have a contradiction and so the result follows.
\end{proof}

\subsection{Compact derivations from $\ell^1(\Z^+)$}
In this section we look at compactness of derivations from  the semigroup algebra $\ell^1(\Z^+)$ - that is the Banach space $\ell^1(\Z^+)$ together with the product, 
\[
ab:=\left(\sum_{r=0}^na_rb_{n-r}:n\in\Z^+\right)_{n\in\Z^+},\quad\left(a=(a_n)_{n\in\Z^+}, b=(b_n)_{n\in\Z^+}\in\ell^1\right)
\]
- into its dual. It is standard (see for example \cite[2.1.13(v)]{Dales}), that this is a Banach algebra and that $c_{00}$ is dense in $A$. We identify $c_{00}$ 
with  the algebra $\C[t]$ of complex valued polynomials in one variable, so that the sequence $(0,1,0,\dots)=t$. It is standard that $\phi\mapsto(\phi(t^k))_{k\in\Z^+}$ is an isometric linear isomorphism 
from $A^*$ to $\ell^\infty$. The following proposition follows trivially from \cite[Lemma 3.3.1.]{Choi}. We provide a direct proof for the 
convenience of the reader.
\begin{prop}\label{1}
Let $\phi\in A^*$. The following are equivalent:
\begin{enumerate}
\item $(n\phi (t^{n-1}))_{n\in\N}\in\ell^\infty,$
\item $\phi=D(t)$ for some continuous derivation $D:A\rightarrow A^*.$
\end{enumerate}
Furthermore $\norm{(n\phi (t^{n-1}))_{n\in\Z^+}}_\infty= \norm{D}$.
\end{prop}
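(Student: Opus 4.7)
The plan is to exploit the density of $\C[t]$ in $A=\ell^1(\Z^+)$ together with the observation that any continuous derivation $D:A\to A^*$ is determined on monomials via the Leibniz rule, giving $D(t^n)=n\,t^{n-1}\cdot D(t)$ by induction. Under the identification $A^*\cong\ell^\infty$, norms of functionals become suprema over $\{\psi(t^k):k\ge 0\}$, which reduces the entire statement to an elementary estimate.

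For $(2)\Rightarrow(1)$, given a continuous derivation $D$ with $D(t)=\phi$, the formula $D(t^n)=n\,t^{n-1}\cdot\phi$ and the definition of the $A$-action on $A^*$ give $D(t^n)(1)=n\phi(t^{n-1})$. Since $\norm{t^n}=\norm{1}=1$ in $A$, boundedness of $D$ forces $\abs{n\phi(t^{n-1})}\le\norm{D}$ for every $n\in\N$, yielding both membership in $\ell^\infty$ and the inequality $\norm{(n\phi(t^{n-1}))_n}_\infty\le\norm{D}$.

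For $(1)\Rightarrow(2)$, set $M:=\norm{(n\phi(t^{n-1}))_n}_\infty$ and define $D$ on $\C[t]$ by $D(1):=0$ and $D(t^n):=n\,t^{n-1}\cdot\phi$ for $n\ge 1$, extended linearly. On a single monomial,
\[\norm{D(t^n)}_{A^*}=\sup_{k\ge 0}\abs{n\phi(t^{n+k-1})}=\sup_{k\ge 0}\frac{n}{n+k}\bigl|(n+k)\phi(t^{n+k-1})\bigr|\le M=M\norm{t^n}_A,\]
so the triangle inequality gives $\norm{D(p)}\le M\norm{p}$ for every $p\in\C[t]$. Density of $\C[t]$ in $A$ then extends $D$ continuously to a bounded linear map $A\to A^*$ of norm at most $M$. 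Because $A$ is commutative, $A^*$ is a symmetric bimodule, so a direct monomial check verifies the Leibniz rule on $\C[t]$; bilinearity promotes it to arbitrary polynomials, and joint continuity of multiplication together with continuity of $D$ propagates it to all of $A$. Since $D(t)=\phi$ by construction, $(2)$ holds, and combining the two norm inequalities gives the claimed equality.

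The only point requiring a moment's thought is the uniform bound $\norm{D(t^n)}_{A^*}\le M$, which reduces to the elementary observation $n/(n+k)\le 1$ for $n\ge 1$, $k\ge 0$; beyond that, the argument is essentially bookkeeping together with a standard density and joint-continuity extension.
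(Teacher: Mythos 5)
Your proposal is correct and follows essentially the same route as the paper's proof: both define $D$ on $\C[t]$ by $D(t^n)=n\,t^{n-1}\cdot\phi$, bound it via the observation that $\abs{n\phi(t^{n+k-1})}\le\frac{n}{n+k}\,M\le M$ and the triangle inequality over coefficients, extend by density, and obtain the reverse inequality by evaluating $D(t^n)$ at $1$. The only difference is presentational: you verify the Leibniz rule explicitly and propagate it by continuity, where the paper simply invokes the unique algebraic derivation on $\C[t]$ with $D(t)=\phi$.
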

\begin{proof}
We first show that (1) implies (2) and that $\norm{(n\phi (t^{n-1}))_{n\in\Z^+}}_\infty\ge \norm{D}$.  Simple algebra yields that, for every $\phi \in A^*$, there is a unique derivation, $D$, from $\C[t]$ 
into $A^*$ with $\phi=D(t)$. By the derivation identity we have $D\left(t^k\right)\left(t^n\right)=\left(t^{k-1}\right)\cdot\phi \left(t^n\right)
=k\phi \left(t^{k+n-1}\right)$ and so, if we 
let $f$ be the polynomial $f=\sum_{k=0}^Na_kt^k$, we have, by linearity, that
\[
D(f)\left(t^n\right)= \sumo{k}{N}k a_k\phi \left(t^{(k+n-1)}\right).
\] 
Hence, since $\phi\mapsto\left(\phi\left(t^k\right)\right)_{n\in\N}$ is an isometric isomorphism,
\[
\norm{D(f)}=\sup_{n\in \N}\abs{\sumo{k}{N}k a_k\phi \left(t^{(k+n-1)}\right)}.
\]
For each $n\ge 0$,
\[
\left\vert  k\phi \left(t^{(k+n-1)}\right)\right\vert\le\left\vert (k+n) \phi (t^{(k+n-1)})\right\vert \le
 \norm{(n\phi (t^{n-1}))_{n\in\Z^+}}_\infty,
\] 
and so
\[
\norm{D(f)}\le \sup_{k,n\in N}\left\vert k \phi \left(t^{(k+n-1)}\right)\right\vert\sum_{k=0}^K\vert a_k \vert 
\le \norm{(n\phi (t^{n-1}))_{n\in\Z^+}}_\infty\norm{f}_1.
\]
Hence $D$ is bounded with norm at most 
$\norm{\left(n\phi \left(t^{n-1}\right)\right)}_\infty$ and so extends continuously a derivation 
$D:A\rightarrow A^*$  with 
$\norm D \le \norm{\left(n\phi \left(t^{n-1}	\right)\right)}_\infty$. 

To prove that (2) implies (1) and that $\norm D \ge \norm{\left(n\phi\left(t^{n-1}\right)\right)}_\infty$, note that 
\[
D(t^k)(1)=kt^{k-1}\cdot\phi(1)=k\phi (t^{k-1}),
\]
 and so 
\[
\abs{k\phi (t^{k-1})}=\abs{D(t^k)(1)}\le\norm D.
\]
Hence  $\norm D \ge \norm{(n\phi (t^{n-1})}_\infty$. The result follows.
\end{proof}
\begin{corollary}\label{2}The map 
\begin{eqnarray*} T&:&\mathcal D(A)\rightarrow A^*\\
&&D\mapsto D(\cdot)(1)
\end{eqnarray*} is an isometric isomorphism.
\end{corollary}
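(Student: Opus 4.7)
The plan is to reduce Corollary \ref{2} to a direct repackaging of Proposition \ref{1} via the isometric identification $A^*\cong\ell^\infty$ given by $\psi\mapsto(\psi(t^k))_{k\in\Z^+}$. Linearity of $T$ is immediate, so the core task is to compute $T(D)$ explicitly in sequence form and compare it with the criterion already established.

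Setting $\phi=D(t)$ and using the fact that $A$ commutative makes $A^*$ automatically a symmetric $A$-bimodule, I would first establish by induction on $k$, using the derivation identity, that $D(t^k)=kt^{k-1}\cdot\phi$. Consequently
\[
T(D)(t^k)=D(t^k)(1)=k\phi(t^{k-1})\qquad(k\in\Z^+),
\]
so under $A^*\cong\ell^\infty$ the functional $T(D)$ is represented by the sequence $(k\phi(t^{k-1}))_k$, which is exactly the sequence featured in Proposition \ref{1}. This observation is the hinge of the argument; it essentially identifies $T$ with the parametrisation $D\mapsto(n\phi(t^{n-1}))_n$ handled by that proposition.

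The isometry then follows immediately: $\|T(D)\|$ equals the sup-norm of the representing sequence, and Proposition \ref{1} identifies this with $\|D\|$. Injectivity is a consequence of the isometry. For surjectivity, given $\psi\in A^*$ corresponding under the identification to $(c_k)_k\in\ell^\infty$, I would construct $\phi\in A^*$ by prescribing its representing sequence to be $(c_{k+1}/(k+1))_k$. Since this sequence is bounded in modulus by $\|\psi\|/(k+1)$, it in fact lies in $c_0\subset\ell^\infty$, so $\phi$ is a well-defined element of $A^*$. The sequence $(k\phi(t^{k-1}))_k$ then recovers $(c_k)_k\in\ell^\infty$, so by Proposition \ref{1} there is a continuous derivation $D$ with $D(t)=\phi$, and the calculation above gives $T(D)=\psi$.

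The main obstacle is notational rather than mathematical: essentially all of the analytic content is already carried by Proposition \ref{1}, and the work of the corollary amounts to unwinding the various identifications and keeping the indexing consistent through the shift $k\leftrightarrow k-1$.
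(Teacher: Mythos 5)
Your isometry computation is exactly the paper's proof: the derivation identity gives $D(t^k)(1)=kD(t)(t^{k-1})$, so under the identification $A^*\cong\ell^\infty$ the norm of $T(D)$ is $\norm{(kD(t)(t^{k-1}))_{k\in\Z^+}}_\infty$, which Proposition \ref{1} identifies with $\norm D$; injectivity then follows. The paper's own proof stops there.

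Your surjectivity step, however, contains a genuine gap at the index $k=0$, and in fact surjectivity onto all of $A^*$ as you state it is false. The algebra $\ell^1(\Z^+)$ is unital with identity $1=t^0$, and every derivation satisfies $D(1)=D(1\cdot 1)=1\cdot D(1)+D(1)\cdot 1=2D(1)$, hence $D(1)=0$; since the identity acts as the identity operator on $A^*$, this gives $T(D)(1)=D(1)(1)=0$ for every $D\in\mathcal D(A)$. So the range of $T$ lies in the proper closed hyperplane $\{\psi\in A^*:\psi(1)=0\}$. Your construction of $\phi$ with representing sequence $(c_{k+1}/(k+1))_{k\in\Z^+}$ does, via Proposition \ref{1}, produce a bounded derivation $D$ with $k\phi(t^{k-1})=c_k$ for all $k\ge 1$, but the $0$th coordinate of the sequence representing $T(D)$ is $0$, not $c_0$; thus $T(D)=\psi$ only when $\psi(1)=0$, and your claim that the sequence ``recovers $(c_k)_k$'' silently discards the $k=0$ term. (Your aside that the sequence defining $\phi$ lies in $c_0$ is true but not needed; boundedness suffices.) The accurate statement --- and all that the subsequent compactness theorem uses --- is that $T$ is an isometric isomorphism of $\mathcal D(A)$ onto this hyperplane, equivalently onto the bounded sequences indexed by $\N$ rather than $\Z^+$; the corollary's wording and the paper's one-line proof, which establishes only the isometry, gloss over this, but your proposal asserts and argues the stronger, false, surjectivity, so the shift $k\leftrightarrow k-1$ that you dismiss as purely notational is exactly where the argument breaks.
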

\begin{proof}
 By the derivation identity, $D(t^k)(1)=kD(t)(t^{k-1})$, and so 
\[
 \norm{D(\cdot)(1)}=\norm{(D(t^k)(1))_{k\in\Z^+}}_\infty=\norm{(kD(t)(t^{k-1}))_{k\in\Z^+}}_\infty,
 \]
which is equal to $\norm D$ by Proposition \ref{1}.
\end{proof}

\begin{thm}
A bounded derivation $D:A\rightarrow A^*$ is compact if and only if it has $(D(t^n)(1))_{n\in\N}\in c_{0}.$
\end{thm}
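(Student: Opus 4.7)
My plan is to use Corollary~\ref{2} to identify bounded derivations with elements of $\ell^\infty$ and to reduce the compactness question to a standard tail-vanishing criterion. Set $\phi = D(t)$ and $c_n := D(t^n)(1) = n\phi(t^{n-1})$. The computation in the proof of Proposition~\ref{1} gives $D(t^n)(t^k) = n\phi(t^{n+k-1}) = \tfrac{n}{n+k}\,c_{n+k}$, so, under the identification $A^*\cong\ell^\infty$ via $\psi\mapsto(\psi(t^k))_{k}$, each $D(t^n)$ corresponds to the sequence $(\tfrac{n}{n+k}c_{n+k})_{k\ge 0}$. Two features are decisive: the $k=0$ coordinate is exactly $c_n$, and, since $(c_m)$ is bounded by Proposition~\ref{1} while $n/(n+k)\to 0$ as $k\to\infty$, each $D(t^n)$ lies in $c_0\subset\ell^\infty$.

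For sufficiency, assume $(c_n)_n\in c_0$ and, for each $N$, let $D_N$ be the derivation corresponding under Corollary~\ref{2} to the truncation $(c_0,\dots,c_N,0,0,\dots)$. A direct calculation shows that $D_N(A)$ sits in the span of the first $N+1$ coordinate functionals on $A$, so $D_N$ has finite rank and is in particular compact; and by Corollary~\ref{2}, $\|D-D_N\| = \sup_{n>N}|c_n|\to 0$. Hence $D$ is a norm-limit of compact operators.

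For necessity, I use that $\{t^n\}_{n\ge 0}$ is the standard basis of $A=\ell^1(\Z^+)$, so $\ball(A)$ equals the closed absolutely convex hull of $\{t^n\}_{n\ge 0}$; a standard Mazur-type argument then shows that $D$ is compact if and only if $\{D(t^n)\}_{n\ge 0}$ is relatively compact in $A^*$. Since this family lies in $c_0$, relative compactness in $\ell^\infty$ coincides with that in $c_0$, and the standard criterion (a bounded subset of $c_0$ is relatively compact iff its tails vanish uniformly) rewrites this as
\[
\lim_{N\to\infty}\sup_{n\ge 0,\,k>N}\tfrac{n}{n+k}|c_{n+k}|=0.
\]
Substituting $m=n+k$ and choosing $n=m-N-1$ for each large $m$, any prescribed $\eps>0$ bounds $\tfrac{m-N-1}{m}|c_m|$; letting $m\to\infty$ then gives $\limsup_m|c_m|\le \eps$, so $c_m\to 0$, as required.

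The main conceptual step is noticing that each $D(t^n)$ automatically lies in $c_0$ thanks to the factor $n/(n+k)$; without this one would have to confront relative compactness in $\ell^\infty$, which is considerably more awkward. The remaining ingredients—the characterisation of compact operators out of $\ell^1$ by relative compactness of the image of the unit vector basis, and the tail-vanishing characterisation of relatively compact subsets of $c_0$—are both standard.
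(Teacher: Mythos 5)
Your proof is correct, and the necessity direction takes a genuinely different route from the paper. The sufficiency half is essentially the paper's argument (truncate the sequence $(D(t^n)(1))_n$, note the truncated derivations have finite rank, and use the isometry of Corollary~\ref{2} to see $D$ is a norm-limit of these), just written out in more detail. For necessity, the paper argues in the contrapositive: assuming $(D(t^n)(1))_n \notin c_0$ it uses the two estimates $\abs{D(t^k)(t^l)} = \tfrac{k}{k+l}\abs{D(t^{k+l})(1)}$ (bounded above by $\tfrac{k}{k+l}$, and below by $\tfrac{\eps k}{k+l}$ along a subsequence) to construct inductively a sequence $(D(t^{j_i}))_i$ that is uniformly $\tfrac{\eps}{4}$-separated in norm, hence has no convergent subsequence. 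You instead argue directly: $D$ is compact iff $\{D(t^n)\}_n$ is relatively compact (via the absolutely convex hull description of $\ball(\ell^1)$ and Mazur's theorem), observe the key fact that each $D(t^n)=(\tfrac{n}{n+k}c_{n+k})_k$ lies in $c_0$, and then apply the uniform-tail-vanishing criterion for compact subsets of $c_0$, extracting $c_m\to 0$ by the substitution $k=N+1$, $n=m-N-1$. Both rest on the same identity $D(t^n)(t^k)=\tfrac{n}{n+k}c_{n+k}$; the paper's construction is more elementary and self-contained (no appeal to Mazur or to the compactness criterion in $c_0$) and yields an explicit separated sequence, while yours is shorter and more conceptual, and your observation that the image of the basis automatically lies in $c_0$ neatly avoids dealing with compactness in $\ell^\infty$. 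The only cosmetic points worth tidying are the degenerate index $n=k=0$ in the formula $\tfrac{n}{n+k}c_{n+k}$ (harmless, since $D(1)=0$) and a remark that the truncated sequences do correspond to bounded derivations, which follows from the surjectivity in Corollary~\ref{2}.
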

\begin{proof}
If $(D(t^n)(1))_{n\in\N}\in c_{0}$, then, by Corollary \ref {2}, we have that it is in the closure of the set 
$\{D:\left(D(t^n)(1)\right)_{n\in\N}\in c_{00}\},$
which consists of finite rank derivations. Hence $D$ is compact.
Now let $D:A\rightarrow A^*$ be a derivation such that $(D(t^n) (1))_{n\in\N}\in\ell^\infty\setminus c_{0}$. We shall show that the 
sequence $(D(t^k))_{k\in\N}$ 
has a subsequence with no convergent sub-subsequence. Without any loss of generality, we assume that $D$ has $\norm D=1$. 
 There exists $\eps>0$ and a 
sequence, $\seq{n}{k} \subseteq \N,$ such that, for all $n\in\N$, $n_k>n_{k-1}$ and  $\abs{D(t^{n_k})(1)}>\eps$. Let $k,l\in\N$. Then
\begin{eqnarray}
\nonumber \abs{D\left(t^k\right)\left(t^l\right)}&=&\abs{kt^{k+l-1}\cdot D(t)(1)}\\
\label{small}&=&\frac{k}{k+l}\abs{D\left(t^{k+l}\right)(1)}\le \frac{k}{k+l}.
\end{eqnarray}
Now suppose that $k+l\in \{n_k:k\in\N\}$. Then
\begin{eqnarray}
\nonumber \abs{D\left(t^k\right)\left(t^l\right)}&=&\abs{kt^{k+l-1}\cdot D(t)(1))}\\
\label{big}&=&\frac{k}{k+l}\abs{D\left(t^{k+l}\right)(1)}\ge \frac{\eps k}{k+l}.
\end{eqnarray}
Suppose that we have already chosen $j_1,\dots, j_{k-1}\in\N$ such that for all $i,i'\in\N$ with $i<i'<k$, we have $j_{i}<j_{i'}$ and
$\norm{D(t^{j_i})-D(t^{j_{i'}})}>\frac{\eps}{4}$. Choose $N\in \{n_k:k\in\N\}$ with $N>1000\eps^{-1}j_{k-1}$, and let
$l_k=\left\lfloor\frac{N}{2} \right\rfloor$ and $j_{k}=N-l$. Then, by (\ref{big}),
\[
\abs{D(t^{j_k})(t^{l_k})}\ge\frac{\eps j_k}{N}>\frac{\eps}{3}.
\]
Also,  if $m\le j_{k-1}$, then, by (\ref{small}),
\[
\abs{D\left(t^m\right)\left(t^{l_k}\right)}\le\frac{m}{m+l_k}\le\frac{j_{k-1}}{250\eps^{-1}j_{k-1}}<\frac{\eps}{250}.
\]
 Thus 
\[
\abs{D\left(t^{m}\right)\left(t^{l_k}\right)-D\left(t^{j_{k}}\right)\left(t^{l_k}\right)}>\frac{\eps}{4}.
\] 
In particular, if $i<k$, then $\norm{D(t^{j_i})-D(t^{j_{k}})}>\frac{\eps}{4}$. 
Hence, by induction, we obtain a sequence, $(j_i)_{i\in\N}$, such that, if $i,k\in \N$ and $i\ne k$ then 
$\norm{D(t^{j_i})-D(t^{j_{k}})}>\frac{\eps}{4}$. Thus $(D(t^{j_i}))_{i\in\N}$ has no convergent subsequence, and so, $D$ is not compact. 
\end{proof}
We conclude that the space of compact derivations on $A$ is linearly isomorphic to  $c_0$.

We finish with a relevant example due to J.F. Feinstein that appears in the present author's PhD thesis (\cite{MeThesis}.
\subsection{A non-compact, bounded derivation from a uniform algebra}
For a compact Hausdorff space $X$ let $C(X)$ be the algebra of continuous functions from $X$ to $\C$ equipped with the uniform norm, which we denote by $\abs\cdot_X$. For a compact subset, $X$, of the complex plane we let $R_0(X)$ be the algebra of rational functions with no poles contained in $X$. We let $R(X)$ be closure of $R_0(X)$ in $C(X)$.  
Let $\Delta$ be the closed unit disc. We shall construct a plane set $X$ by removing a sequence, $(D_n)_{n\in\N}$, of open discs  from $\Delta$  such that there is a non-compact, bounded 
derivation from $R(X)$ into a symmetric Banach $R(X)$-bimodule. We shall need the 
following result, which is \cite[Lemma 3]{FeinsteinTrivJen}.
\begin{prop}\label{ftj}
Let $\Delta$ be the closed unit disc and $(D_n)_{n\in\N}$ a sequence of open discs each contained in $\Delta$. Set 
\[X:=\Delta\setminus \bigcup_{i=1}^\infty D_n.\] 
We set $r_n=r(D_n)$ and  for each 
$z\in X$ we set $s_n(z)=\up{dist}(z,D_n)$. We also set $r_0= 1$ and $s_0(z)=1-\abs z$. If $s_n(z)>0$ for all $n\in\N$ then for
$f\in R_0(X)$  we have 
\[
\abs{f'(z)}\le\sum_{j=0}^\infty \frac{r_j}{s_j(z)^2}\abs f_{X}.
\]
\end{prop}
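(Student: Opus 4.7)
The plan is to apply Cauchy's integral formula for the derivative to $f'(z)$, using a cycle that circumscribes $z$ exactly once while having winding number zero about every pole of $f$. Since $f\in R_0(X)$ is rational, its pole set is a finite subset of $\C\setminus X\subseteq (\C\setminus\Delta)\cup\bigcup_n D_n$. Let $\Gamma$ be the cycle consisting of $\partial\Delta$ traversed counterclockwise together with each $\partial D_n$ traversed clockwise. Only finitely many of these inner circles contribute non-trivially, because for any $D_n$ containing no pole of $f$, the integrand $f(\zeta)/(\zeta-z)^2$ is holomorphic on a neighbourhood of $\overline{D_n}$, so the corresponding integral vanishes by Cauchy's theorem. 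In particular, the sum indexed by $n$ may freely be written as an infinite series without altering its value.

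Next I would verify the winding-number conditions. For $z$ as in the statement, $z$ is interior to $\Delta$ (by $s_0(z)>0$) and exterior to every $\overline{D_n}$ (by $s_n(z)>0$), so $\operatorname{ind}_\Gamma(z)=1$. For any pole $w$ of $f$ lying in $\C\setminus\Delta$, every component of $\Gamma$ has winding zero around $w$; for $w$ lying in some $D_n$, the contribution $+1$ from $\partial\Delta$ and $-1$ from $\partial D_n$ cancel. Cauchy's integral formula for the derivative then yields
\[f'(z)=\frac{1}{2\pi i}\oint_{\partial\Delta}\frac{f(\zeta)}{(\zeta-z)^2}\,d\zeta-\sum_{n=1}^\infty\frac{1}{2\pi i}\oint_{\partial D_n}\frac{f(\zeta)}{(\zeta-z)^2}\,d\zeta.\]
The estimation is now routine: on $\partial\Delta$ one has $\abs{\zeta-z}\ge 1-\abs z=s_0(z)$, and the contour has length $2\pi=2\pi r_0$, giving a contribution bounded by $r_0\abs{f}_X/s_0(z)^2$; on each $\partial D_n$ one has $\abs{\zeta-z}\ge\dist(z,D_n)=s_n(z)$ and length $2\pi r_n$, giving $r_n\abs{f}_X/s_n(z)^2$. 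Summing yields the claimed inequality.

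The principal technical obstacle is the bound $\abs{f(\zeta)}\le\abs{f}_X$, used freely in the estimation above but \emph{a priori} available only for $\zeta\in X$: portions of $\partial D_n$ may lie inside another disc $D_m$, outside $X$. A related issue is that the winding-number cancellation in the previous paragraph breaks down for a pole lying in two or more overlapping $D_n$. Both difficulties can be handled together by a perturbation argument: since only finitely many discs $D_n$ contain poles of $f$, one may replace this finite subcollection by pairwise-disjoint, slightly larger discs whose closures are contained in the open unit disc and disjoint from the closures of the remaining $D_m$, each still enclosing the same pole(s). The Cauchy estimate applies cleanly to this simpler configuration, and letting the perturbation shrink to zero restores the inequality in its stated form.
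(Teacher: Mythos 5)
The paper does not actually prove Proposition \ref{ftj}: it is imported verbatim from Feinstein's paper (cited as Lemma 3 there), so there is no internal proof to compare against; I can only judge your argument on its own terms. Your contour argument is the natural one, and it is correct whenever the discs $D_n$ are pairwise disjoint (as they are in the Example where the proposition is applied): in that case each circle $\partial D_n$ lies in $X$, each pole of $f$ inside $\Delta$ lies in exactly one disc and on no circle, the winding numbers behave as you say, and the standard length--distance estimates give exactly the stated bound.

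As a proof of the proposition as stated, however, which allows overlapping discs, there is a genuine gap which the final perturbation does not repair. First, the claim that only finitely many $D_n$ contain poles of $f$ is unjustified: a single pole may lie in infinitely many of the discs, in which case infinitely many inner integrals are non-zero (each picking up the full residue), the displayed series for $f'(z)$ need not converge, and even formally the winding numbers over-count. Second, a pole may lie on some circle $\partial D_n$, so that term is not even defined. Third, and most seriously, the proposed repair is in general impossible: if a disc $D_m$ meets a pole-containing disc $D_n$ without being contained in it, then every circle surrounding $D_n$ and close to $\partial D_n$ meets $D_m$, so no ``slightly larger'' pairwise-disjoint discs avoiding the closures of all remaining discs exist; indeed in the typical Swiss-cheese situation, where $\bigcup_n D_n$ is dense in $\Delta$, there may be no circle in $\Delta$ at all that avoids every disc, so there is no perturbed contour on which the bound $\abs{f(\zeta)}\le\abs{f}_X$ can be invoked. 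Handling the overlapping case therefore needs a different idea (for instance a maximum-principle argument controlling $\abs{f}$ on contours that leave $X$, or an induction on the poles), or else the disjointness hypothesis under which your argument is complete should be added -- which would suffice for the use made of the proposition in this paper. A minor further point: the hypotheses give $s_n(z)>0$ only for $n\in\N$, so the case $\abs{z}=1$ should be dismissed by observing that the right-hand side is then infinite.
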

\begin{ex}
 Let $I=\left[0,\frac{1}{2}\right]$. For any compact plane set $X$ with $I\subseteq X$, we make 
$C(I)$ a symmetric Banach $R(X)$-bimodule by defining the action 
\[(f\cdot g)(x)=(g\cdot f)(x)= f(x)g(x)\quad f\in R(X),\,g\in C(I).\] 
It is clear that the map 
$D:R_0(X)\rightarrow C(I)$ given by $D(f)=f'|_I$ is a derivation. We shall construct a collection $\{D_n:n\in\N\}$ of disjoint open discs contained in $\Delta$ such that, setting $X=\Delta\setminus\bigcup_{n=1}^\infty D_n$, we have
$I\subseteq X$ and such that  the derivation $D$ is  bounded and so extends by continuity to a bounded derivation $R(X)\rightarrow C(I)$ which is not compact. For $n\in\N$, let 
$I_n=\left[\frac{1}{2}-2^{-n},\frac{1}{2}-2^{-(n+1)}\right[\,$ and $x_n=\frac{1}{2}-3\cdot2^{-(n+1)}$; that is, $x_n$ is the 
midpoint of $I_n$. Choose $y_n\in\lopen0,1\ropen$ small enough that 
\begin{eqnarray}
\label{yn1}x_n+iy_n,&\in& \Delta\\
\label{yn2}\frac{1}{(1-y_n)^2}&<&2, \textrm{ and}\\
\label{yn3}\frac{y_n^2}{\left(\left(\left(2^{-2(n+1)}\right)-y_n^2\right)^{\frac{1}{2}}+y_n^2\right)^2}&<&2^{-(n+1)}.
\end{eqnarray}
Set $a_n=x_n+iy_n$, $r_n=y_n^2$, $D_n=B(a_n,r_n)$ and $X=\Delta\setminus\bigcup_{n=1}^\infty D_n$.  We also set $r_0=1$. 
Let $z\in X$. We let $s_0(z)=1-\abs z$ and for $n\in\N$, let $s_n(z)=\up{dist}(z,D_n)$. 
Now let $x\in I$. Then $s_0(x)\ge\frac{1}{2}$ so $\frac{r_0}{s_0(x)^2}=s_0(x)^{-2}\le 4$. Also, either: $x=\frac{1}{2}$, in which case, for each $j\in\N$, 
$s_j(z)\ge\up{dist}(D_j,\R\-I_j)=\left(2^{-2(j+1)}+y_j^2\right)^{\frac{1}{2}}-y_j^2$; or there exists a unique $n\in\N$ such that 
$x\in I_n$. In this second case, for $j\in \N$, 
\begin{equation}
\label{esti}s_j(x)\ge\left\{\begin{array}{ll}
 \up{dist}(D_n,\R)=y_n-r_n=y_n-y_n^2&\textrm{ if }j=n\\
\up{dist}(D_j,\R\-I_j)\ge\left(2^{-2(n+1)}+y_n^2\right)^{\frac{1}{2}}-y_n^2&\textrm{ if }j\ne n
\end{array}\right..
\end{equation}
Thus, by (\ref{yn2}), (\ref{yn3}) and (\ref{esti}), 
\begin{eqnarray*}
 \sum_{j=0}^\infty \frac{r_j}{s_j(x)^2}&\le& 4+\frac{y_n^2}{(y_n-y_n^2)^2}+
\sum_{j=1}^\infty\frac{y_j^2}{\left(\left(2^{-2(j+1)}+y_j^2\right)^{\frac{1}{2}}-y_j^2\right)^2}\\
&<& 4+2+\sum_{j=1}^\infty2^{-(j+1)}=\frac{13}{2}.
\end{eqnarray*}
By Proposition \ref{ftj}, this implies that  $\abs{f'}_I<\frac{13}{2}\abs{f}_{X}$, for $f\in R_0(X)$. Hence $D$ is a bounded derivation from $R_0(X)$ 
to $C(I)$. We extend $D$ by continuity to a derivation from $R(X)$ to $C(I)$, which we shall also call $D$. It remains to show that $D$ is not compact. Let $n\in N$, and let
\[
f_n(z)=\frac{r_n}{z-a_n},\quad(z\in X).
\]
Then $\abs{f_n}_{X}=1$. Also 
\[
f_n'(z)=\frac{-r_n}{(z-a_n)^2},\quad(z\in X).
\]
Clearly, for each $x\in[0,1/2\ropen$, $f_n'(x)\rightarrow 0$ as $n\rightarrow\infty$. Thus, if $\left(f_n'|_I\right)_{n\in\N}$ were to have a convergent subsequence the limit would have to 
be the zero function. However, $\abs{f_n'(x_n)}=1$ for each $n\in\N$. Hence $(D(f_n))_{n\in\N}=\left(f_n'|_I\right)_{n\in\N}$ has no convergent subsequence, and thus $D$ 
is not a compact linear map. 
\end{ex}


\section*{Acknowledgements}
This work is adapted from the author's PhD thesis (\cite{MeThesis}), which was produced under the supervision of J. F. Feinstein and with the support of a grant from the EPSRC (UK). 

This paper is based on a lecture delivered at the 19$^\mathrm{th}$
International Conference on Banach Algebras held at B\k{e}dlewo, July 14--24,
2009. The support for the meeting by the Polish Academy of Sciences, the
European Science Foundation under the ESF-EMS-ERCOM partnership, and the
Faculty of Mathematics and Computer Science of the Adam Mickiewicz University
at Pozna\'n is gratefully acknowledged. The author is grateful to FCT (Portugal) for paying his travel costs to this conference and funding his continuing research under the grant SFRH/BPD/40762/2007.

\def\cprime{$'$}
\providecommand{\bysame}{\leavevmode\hbox to3em{\hrulefill}\thinspace}
\providecommand{\MR}{\relax\ifhmode\unskip\space\fi MR }
\providecommand{\MRhref}[2]{%
  \href{http://www.ams.org/mathscinet-getitem?mr=#1}{#2}
}
\providecommand{\href}[2]{#2}

Departamento de Matem\'atica,
Instituto Superior T\'ecnico, \\
Av. Rovisco Pais, 
1049-001 Lisboa,
Portugal \\
E-mail: mheath@math.ist.utl.pt
\end{document}